\newtheorem{theorem}{Theorem} 
\newtheorem{cor}[theorem]{Corollary}
\newtheorem{claim}{Claim}
\newtheorem*{rem}{Remark}
\newcommand{\eps}{\varepsilon}
\newcommand{\cF}{\mathcal{F}}
\newcommand{\cB}{\mathcal{B}}
\newcommand{\Nat}{\mathbb{N}}
\DeclareMathOperator\disp{disp}
\def\const{1920}
\begin{document}

\title{A tight lower bound on the minimal dispersion}

\author{M. Tr\"odler\thanks{Department of Mathematics, Faculty of Nuclear Sciences and Physical Engineering,
Czech Technical University, Trojanova 13, 12000 Praha, Czech Republic.}
\thanks{E-mail: \href{mailto:trodlmat@fjfi.cvut.cz}{trodlmat@fjfi.cvut.cz}.
The work of this author has been supported by the grant P202/23/04720S of the Grant Agency of the Czech Republic},
\and J. Volec\footnotemark[1]
\thanks{E-mail: \href{mailto:jan.volec@fjfi.cvut.cz}{jan.volec@fjfi.cvut.cz}.
The work of this author has been supported by the grant 23-06815M of the Grant Agency of the Czech Republic},
\and and J. Vyb\'\i ral\footnotemark[1]
\thanks{Corresponding author. E-mail: \href{mailto:jan.vybiral@fjfi.cvut.cz}{jan.vybiral@fjfi.cvut.cz}.
The work of this author has been supported by the grant P202/23/04720S of the Grant Agency of the Czech Republic}
}
\date{}
\maketitle
\begin{abstract}
We give a new lower bound for the minimal dispersion of a point set in the unit cube and its inverse function in the high dimension regime.
This is done by considering only a very small class of test boxes, which allows us to reduce bounding the dispersion to a problem in extremal set theory.
Specifically, we translate a lower bound on the size of $r$-cover-free families
to a lower bound on the inverse of the minimal dispersion of a point set.
The lower bound we obtain matches the recently obtained upper bound on the minimal dispersion up to logarithmic terms.
\end{abstract}

\section{Introduction and the Main Result}

For a given subset of the $d$-dimensional unit cube $X\subset [0,1]^d$, there are various different ways how to measure whether $X$ is well spread over the unit cube.
Based on the previous work of Hlawka \cite{Hlawka} and Niederreiter \cite{Nieder}, Rote and Tichy~\cite{RT96} introduced in 1996 a concept called the dispersion of $X$.
The \emph{dispersion of $X$} is the volume of the largest axis-parallel box in $[0,1]^d$, which does not intersect $X$, i.e.,
\[
\disp(X)=\sup_{B:B\cap X=\emptyset} |B|,
\]
where the supremum runs over all the boxes $B=\prod_{i=1}^d (a_i,b_i)$, where $0\le a_i<b_i\le 1$ for all $i\in\{1,\dots,d\},$ and $|B|$ stands for the volume of $B$.
Intuitively, the smaller the dispersion of a point set is, the better spread over the unit cube the points have.
We are therefore interested in point sets that have simultaneously small cardinality and small dispersion.

Given $n \in \Nat$, we denote the minimal dispersion of a point set that has cardinality $n$ by
\[
\disp^*(n,d)=\inf_{\substack{X\subset [0,1]^d\\\#X=n}}\disp(X).
\]
For a fixed $d \in \Nat$ and $\eps\in(0,1)$, we define the inverse function of the minimal dispersion in $[0,1]^d$ by
\begin{align*}
N(\varepsilon,d)&=\min\{n\in\Nat: \disp^*(n,d)\le \varepsilon\}\\
&=\min\{n\in\Nat: \exists X\subset [0,1]^d\ \text{with}\ \#X=n\ \text{and}\ \disp(X)\le\varepsilon\}
.
\end{align*}

The study of the minimal dispersion $\disp^*(n,d)$ as well as its inverse function $N(\varepsilon,d)$
recently attracted a remarkable attention in the mathematical community.
The pigeonhole principle implies the trivial lower bound  $\disp^*(n,d)\ge \frac{1}{n+1}$,
which translates into \[N(\varepsilon,d)\ge \frac1\varepsilon-1.\]
This bound was complemented in \cite{RT96} and then further improved by Bukh and Chao~\cite{BC} to
\begin{equation}\label{eq:BC}
N(\varepsilon,d)\le C \cdot \frac{d^2\log d}{\varepsilon}.
\end{equation}
Let us note that further upper and lower bounds for different regimes of $\varepsilon$ and $d$ were obtained also in \cite{Dum, Lit, LL, Mac, Rudolf},
and the dispersion of certain specific sets was studied in \cite{BH, HKKR, Krieg, Kritz, LW, Teml,Mario}.

The first non-trivial lower bound on $N(\varepsilon,d)$ that exhibits a growth if $d$ increases was achieved by Aistleitner, Hinrichs and Rudolf~\cite{AHR}.
Specifically, they showed that
\begin{equation}\label{eq:AHR}
N(\varepsilon,d)\ge \frac{\log_2 d}{8\varepsilon}
\end{equation}
for every $d\ge 2$ and every $0<\varepsilon<1/4.$
The results mentioned so far suggest that $N(\varepsilon,d)$ depends linearly on $1/\varepsilon$ and it only remains to determine the $d$-dependence.

In another line of research, Sosnovec~\cite{Sosnovec} showed that $N(\varepsilon,d)\le C_\varepsilon \log d.$
The dependence of $C_\varepsilon$ was not optimized in \cite{Sosnovec}, and a brief inspection of the proof reveals that it is actually super-exponential in $1/\varepsilon$.
Nevertheless, it matches \eqref{eq:AHR} when it comes to the dependence on $d$.
Let us mention that the point set with small dispersion was generated in \cite{Sosnovec}
by sampling the coordinates of the points independently at random from a uniform grid in $[0,1].$
An improved analysis of the same method was given in \cite{UV}, which together with a further improvement due to Litvak~\cite{Lit} yields
\begin{equation}\label{eq:UVL}
N(\varepsilon,d)\le C\cdot \frac{\log d \cdot \log(\frac 1\varepsilon)}{\varepsilon^2}.
\end{equation}
The comparison between \eqref{eq:BC} and \eqref{eq:UVL} is not easy. Depending on the relation between $d$ and $1/\varepsilon$, one or the other might perform better.
Although the analysis in~\cite{Lit,UV} regarding the dependence on $1/\varepsilon$ was much finer than the one in~\cite{Sosnovec},
it was quite natural to assume that the second power of $1/\varepsilon$ in \eqref{eq:UVL} is an artefact of the proof method and that the bound could be potentially further improved even when $\varepsilon$ is moderately large.

The main result of this work is to show that, surprisingly, this is not the case and the second power of $1/\varepsilon$ in \eqref{eq:UVL} is optimal when $\varepsilon$ is sufficiently large.
\begin{theorem}\label{thm:main}
There is an absolute constant $c>0$, such that the following statement is true. For any integer $d\ge 2$ and any real $\eps$ satisfying $\frac14 \ge \eps \ge \frac1{4\sqrt{d}}$,
it holds that
\begin{equation}\label{eq:mainthm}
N(\eps,d) >  \frac{ c\,\log d }{ \eps^2 \cdot \log{\frac1\eps} } 
\,.
\end{equation}
\end{theorem}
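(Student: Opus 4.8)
The plan is to test the given point set against only a tiny sub-family of boxes --- essentially one ``corner'' box per ordered choice of $r+1$ coordinates --- and to observe that, once $\disp(X)\le\eps$, every one of these boxes must contain a point of $X$, which is precisely the assertion that an associated family of $d$ subsets of an $|X|$-element ground set is $r$-cover-free. Recall that $\cF\subseteq 2^{[m]}$, $[m]:=\{1,\dots,m\}$, is \emph{$r$-cover-free} if $A_0\not\subseteq A_1\cup\dots\cup A_r$ for all pairwise distinct $A_0,\dots,A_r\in\cF$, and that the Dyachkov--Rykov bound (with later refinements by Ruszink\'o and by F\"uredi) gives an absolute constant $c_0>0$ such that every $r$-cover-free family of $N$ sets on $[m]$ satisfies $m\ge c_0\, r^2\log N/\log r$, provided $r\ge2$ and $N$ is bounded below by a fixed power of $r$. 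So I would fix $r=\Theta(1/\eps)$ and attach to any $X=\{x_1,\dots,x_n\}\subset[0,1]^d$ with $\disp(X)\le\eps$ an $r$-cover-free family of $d$ subsets of $[n]$; reading off the bound with $N=d$ and $m=n$ then yields $n\ge c_0\, r^2\log d/\log r\ge c\,\log d/(\eps^2\log\tfrac1\eps)$, which is \eqref{eq:mainthm}.

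For the construction, let $r$ be the largest integer with $\tfrac{r^r}{(r+1)^{r+1}}>\eps$. Since $\tfrac{r^r}{(r+1)^{r+1}}=\tfrac1{r+1}(1-\tfrac1{r+1})^r$ and $\tfrac14<(1-\tfrac1{r+1})^r<1$ for $r\ge1$, one gets $r=\Theta(1/\eps)$; moreover $\eps\ge\tfrac1{4\sqrt d}$ forces $r<4\sqrt d$, hence $d>r^2/16$, so that $N=d$ sits in the valid range of the cover-free bound. Put $\delta:=\tfrac1{r+1}$ and, for $i\in[d]$,
\[
A_i:=\{\,j\in[n]\;:\;(x_j)_i>1-\delta\,\}\subseteq[n].
\]
Given pairwise distinct coordinates $i_0,i_1,\dots,i_r\in[d]$, let $B=B(i_0,\dots,i_r)$ be the box that equals $(1-\delta,1)$ in coordinate $i_0$, equals $(0,1-\delta)$ in each of the coordinates $i_1,\dots,i_r$, and equals $(0,1)$ in every remaining coordinate. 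Its volume is $\delta(1-\delta)^r=\tfrac{r^r}{(r+1)^{r+1}}>\eps\ge\disp(X)$, so $B$ cannot be disjoint from $X$, say $x_j\in B$. Then $(x_j)_{i_0}>1-\delta$ while $(x_j)_{i_\ell}<1-\delta$ for $\ell=1,\dots,r$, i.e.\ $j\in A_{i_0}$ and $j\notin A_{i_1}\cup\dots\cup A_{i_r}$. As $i_0,\dots,i_r$ ranged over all ordered $(r+1)$-tuples of distinct coordinates, $\{A_1,\dots,A_d\}$ is a family of $d$ pairwise distinct sets which is $r$-cover-free, and the first paragraph concludes the proof.

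The points that need care are all in the parameter bookkeeping. One must pin down the two-sided estimate for $\tfrac{r^r}{(r+1)^{r+1}}$ precisely enough to know $r=\Theta(1/\eps)$ and $\log r=\Theta(\log\tfrac1\eps)$, and then check that the hypothesis $\tfrac1{4\sqrt d}\le\eps\le\tfrac14$ is exactly what keeps $r$ in the range $[1,\sqrt d]$ in which the clean form $m\gtrsim r^2\log N/\log r$ of the cover-free bound is available --- this is the role of the lower restriction on $\eps$. For $\eps$ close to $\tfrac14$ the above only yields $r=1$; there one replaces the Dyachkov--Rykov input by Sperner's theorem ($1$-cover-free $=$ antichain, so $n\ge\log_2 d$) and uses that $\log d/(\eps^2\log\tfrac1\eps)=\Theta(\log d)$ on that bounded range of $\eps$, so a sufficiently small absolute constant $c$ settles it (cf.\ also \eqref{eq:AHR}); the single endpoint $\eps=\tfrac14$, where our boxes have volume exactly $\tfrac14$, needs a short separate remark. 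Finally, the open/closed endpoints cause no trouble: with the strict inequality in the definition of $A_i$ and the open box $B$, the implication $x_j\in B\Rightarrow j\in A_{i_0}\setminus(A_{i_1}\cup\dots\cup A_{i_r})$ holds verbatim, with no perturbation of the points needed.
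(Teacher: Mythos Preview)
Your proof is correct and follows essentially the same approach as the paper: both reduce the problem to showing that an associated family of $d$ subsets of $X$ is $r$-cover-free by testing against boxes that are thin in one coordinate and bounded away from an endpoint in $r$ others, and then invoke the Ruszink\'o/Alon--Asodi lower bound $\#X \gtrsim r^2\log d/\log r$ with $r=\Theta(1/\eps)$. The only cosmetic differences are your threshold $\delta=1/(r+1)$ versus the paper's dyadic $2^{1-k}$, the reflection $x\mapsto 1-x$, and the fact that the paper likewise treats the small-$r$ regime separately via $1$-cover-freeness.
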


The proof of this result is motivated by the insight gained by a detailed inspection of the proof given in \cite{UV}.
For a fixed box $B$, the sampling from a finite grid, which is bounded away of zero and one, naturally splits the individual coordinates into two groups ---
those, where the corresponding interval $(a_i,b_i)$ is large enough and covers the whole grid, and the coordinates, where this is not the case.
It turned out in \cite{UV} that the boxes with a large number of the coordinates inside the second group are the most difficult to hit by a randomly generated point.
Therefore, we choose a very small class of test boxes when compared to the number of all the axis-parallel boxes, and consider any point set that hits all these test boxes.
This naturally reduces the question under study to a well-known combinatorial problem called \emph{$r$-cover-free systems}, see Section~\ref{sec:proof} for the definition.
The key tool in our proof is a lower bound of Alon and Asodi~\cite{AA} on the minimum size of {$r$-cover-free system}.

Reformulated in the language of minimal dispersion, Theorem \ref{thm:main} states the following.
\begin{cor}There are two absolute constants $c_1,c_2>0$ such that for every positive integers $d$ and $n$ with $d\ge 2$ and $2\log d\le n\le c_1 d$ it holds that
\[
\disp^*(n,d)\ge c_2 \biggl(\frac{\log d}{n}\biggr)^{1/2}\cdot \biggl(\log\Bigl(\frac{n}{\log d}\Bigr)\biggr)^{-1/2}.
\]
\end{cor}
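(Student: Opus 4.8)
The plan is to deduce the Corollary from Theorem~\ref{thm:main} by a routine change of variables that converts the lower bound on $N(\eps,d)$ into one on $\disp^*(n,d)$. Two elementary facts provide the bridge. First, $\disp^*(\,\cdot\,,d)$ is non-increasing in its first argument: if $X\subseteq X'$ then every box avoiding $X'$ also avoids $X$, so $\disp(X')\le\disp(X)$, and taking infima yields $\disp^*(n+1,d)\le\disp^*(n,d)$. Second, by the definition of $N$, for a positive integer $m$ and $\eps\in(0,1)$ one has $\disp^*(m,d)>\eps$ if and only if $m<N(\eps,d)$. Hence, given $d\ge 2$ and $n$ with $2\log d\le n\le c_1 d$, it suffices to exhibit an \emph{admissible} $\eps$ --- one with $\tfrac1{4\sqrt d}\le\eps\le\tfrac14$ --- for which $N(\eps,d)>n$ and $\eps\ge c_2\bigl(\tfrac{\log d}{n}\bigr)^{1/2}\bigl(\log\tfrac{n}{\log d}\bigr)^{-1/2}$; then $\disp^*(n,d)>\eps$ is already the desired inequality.

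Accordingly, I would write $t:=n/\log d$, so that $t\ge 2$ by hypothesis, and choose
\[
\eps \;:=\; c_2\, t^{-1/2}(\log t)^{-1/2} \;=\; c_2\Bigl(\tfrac{\log d}{n}\Bigr)^{1/2}\Bigl(\log\tfrac{n}{\log d}\Bigr)^{-1/2}
\]
for a small absolute constant $c_2>0$ to be fixed at the end. One first checks that this $\eps$ lies in the admissibility window of Theorem~\ref{thm:main}: the bound $\eps\le\tfrac14$ follows from $t\ge 2$ once $c_2$ is small, while for $\eps\ge\tfrac1{4\sqrt d}$ one uses $n\le c_1 d$, which gives $t\le c_1 d/\log d$, hence $\log t\le\log d$ when $c_1\le 1$, and therefore $\eps\ge c_2\,c_1^{-1/2}\,d^{-1/2}\ge\tfrac1{4\sqrt d}$ as soon as $c_1\le 16 c_2^2$.

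The only step with computational content is verifying $\eps^2\log\tfrac1\eps\le c\,\tfrac{\log d}{n}=c/t$; granting it, Theorem~\ref{thm:main} gives $N(\eps,d)>\tfrac{c\log d}{\eps^2\log(1/\eps)}\ge n$, as needed. Since $\log\tfrac1\eps=\log\tfrac1{c_2}+\tfrac12\log t+\tfrac12\log\log t\le\log\tfrac1{c_2}+\log t$ (using $\log\log t\le\log t$ for $t>1$) and $\log t\ge\log 2$, one gets
\[
\eps^2\log\tfrac1\eps \;\le\; c_2^2\,t^{-1}(\log t)^{-1}\bigl(\log\tfrac1{c_2}+\log t\bigr) \;\le\; c_2^2\Bigl(1+\tfrac{\log(1/c_2)}{\log 2}\Bigr)\,t^{-1},
\]
and the prefactor $c_2^2\bigl(1+\log(1/c_2)/\log 2\bigr)$ depends only on $c_2$ and tends to $0$ as $c_2\to 0^+$, so choosing $c_2$ small makes it at most $c$; then $c_1:=\min\{1,16c_2^2\}$ makes all of the above hold simultaneously. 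I do not expect a genuine obstacle: the only care needed is the bookkeeping of the constants $c,c_1,c_2$ so that the admissibility window is non-empty and the last inequality holds uniformly over the stated range, the tightest point being $n$ close to $2\log d$ (that is, $t$ near $2$), where the $\log\log t$ and $\log(1/c_2)$ terms are comparable to $\log t$.
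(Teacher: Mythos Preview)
Your argument is correct and is precisely the routine inversion the paper has in mind: the paper does not give a separate proof of the Corollary but simply presents it as the restatement of Theorem~\ref{thm:main} in terms of $\disp^*(n,d)$, and your change of variables $\eps=c_2\bigl(\tfrac{\log d}{n}\bigr)^{1/2}\bigl(\log\tfrac{n}{\log d}\bigr)^{-1/2}$ together with the equivalence $\disp^*(n,d)>\eps\iff n<N(\eps,d)$ is exactly the intended deduction. The constant bookkeeping you carry out (ensuring $\eps\in[\tfrac1{4\sqrt d},\tfrac14]$ via $c_1\le\min\{1,16c_2^2\}$ and $\eps^2\log\tfrac1\eps\le c/t$ via $c_2$ small) is sound.
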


Before we present our proof of Theorem \ref{thm:main}, let us add a few remarks.
\begin{rem}
\begin{enumerate}
\item It follows from the proof of Theorem~\ref{thm:main} that one can actually choose $c=1/\const$, but we did not attempt to optimize this constant.
\item The essentially tight factor $1/\eps^2$ in the lower bound for $N(\varepsilon,d)$ in \eqref{eq:mainthm} is rather surprising.
It seems that a logarithmic dependence on $d$ and a linear dependence on $1/\varepsilon$ exclude each other.
What might be even more surprising, this lower bound is obtained by considering only a very small class of axis-parallel test boxes.
\item We leave it as an open problem whether the proof method can be further improved, possibly extending \eqref{eq:mainthm} to smaller values of $\varepsilon$.
\end{enumerate}
\end{rem}

\section{Proof of Theorem \ref{thm:main}}\label{sec:proof}

We start by fixing some notation that will be used in the proof.
For a finite set $X$, we denote its size by $\# X$.
For a positive integer $d$, we denote by $[d]$ the set $\{1,2,\ldots,d\}$.
Additionally, for a non-negative integer $k$, we write $\binom{[d]}k$ to denote the collection of all the $k$-element subsets of $[d]$.
Given a point $x \in [0,1]^d$ and an integer $i \in [d]$, we denote by $(x)_i$ the $i$-th coordinate of $x$.

By focusing on a very specific set of axis-parallel boxes of volume at least $\eps$, we are able to translate bounding $N(\eps,d)$ from below to a question in extremal set theory.
We say that a family $\cF$ of subsets of a ground set $X$ is \emph{$r$-cover-free} if no $F \in \cF$ is contained in the union of any $r$ sets from $\cF \setminus \{F\}$.
Based on a previous work of Ruszink\'o \cite{Rusz}, Alon and Asodi~\cite{AA} proved the following lower-bound on the size of the ground set of an $r$-cover-free family.
\begin{theorem}[{\cite[Lemma 2.8]{AA}}]
\label{thm:AlonAsodi}
Let $\cF$ be a family of $d$ subsets of a ground set $X$.
If $\cF$ is $r$-cover-free, where $2 \le r \le 2\sqrt{d}$, then \[
\# X > \frac1{10}\cdot \frac{r^2 \log\left(d-\frac r2\right)}{\log r}
\,.\]
\end{theorem}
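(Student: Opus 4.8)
The plan is to argue about the $d\times\#X$ incidence matrix of $\cF$, viewing each set $F\in\cF$ as its $0/1$ characteristic vector over the ground set $X$, so that $r$-cover-freeness becomes the statement that no characteristic vector is coordinatewise dominated by the bitwise OR of any $r$ of the others. Two elementary consequences drive everything. First, a \emph{private-element} property: for every $F\in\cF$ and every choice of $r$ further sets $F_1,\dots,F_r\in\cF$, some point of $X$ lies in $F$ but in none of $F_1,\dots,F_r$. Second, \emph{degree} bookkeeping: writing $\deg(x)$ for the number of members of $\cF$ containing $x\in X$, one has $\sum_x\deg(x)=\sum_F\#F$, and points of very large degree are nearly useless as private elements. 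I would first record these, together with the observation that an $r$-cover-free family with $r\ge1$ consists of pairwise distinct sets, so $\#\cF=d$ forces $d$ distinct characteristic vectors.

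The natural first attempt is a counting estimate. For a fixed $F$, each $r$-subset of $\cF\setminus\{F\}$ needs a private point, and a point $x\in F$ serves exactly those $r$-subsets avoiding the $\deg(x)-1$ other sets through $x$; summing over $F$ gives the clean inequality $d\binom{d-1}{r}\le\sum_{x\in X}\deg(x)\binom{d-\deg(x)}{r}$. Optimising the summand in $\deg(x)$, however, yields only $\#X\gtrsim r$, with no logarithm at all. An entropy variant does better: choosing $F$ uniformly at random, its characteristic vector determines $F$, so $\log_2 d\le\sum_{x\in X}H\bigl(\deg(x)/d\bigr)$ for the binary entropy $H$, and once the degrees are controlled (no point in too large a fraction of the sets) each summand is about $\tfrac{\log r}{r}$, giving $\#X\gtrsim\tfrac{r\log d}{\log r}$. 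Establishing that degree control — high-but-not-universal degree points cannot simply be deleted without possibly destroying cover-freeness — is itself part of the work. Either way these routes stop a full factor of $r$ short of the target $\tfrac{r^2\log d}{\log r}$.

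The crux is to recover this missing factor of $r$, and I expect this to be the main obstacle. The reason the above is lossy is that it uses the cover-free condition only \emph{marginally} — one private element, or one coordinate's worth of entropy at a time — whereas the condition constrains all $\binom{d-1}{r}$ index-subsets simultaneously. Following Ruszink\'o, I would replace the marginal count by a global double-counting that exploits the failure of all $r$-covers at once: the set-cover reading, namely that the minimum number of members needed to cover any single $F$ must exceed $r$, shows each set is forced to be genuinely spread out rather than merely large, and I would aim to convert the overlap structure this imposes into the extra factor $r$ while the logarithmic term $\log d$ comes from the entropy/covering count already in place. Making the large-degree, intermediate-degree and low-degree points contribute compatibly and uniformly — so that no degree profile escapes the bound — is the technical heart, and I would model the argument on Ruszink\'o's, pushing the constants.

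Finally I would carry out the constant bookkeeping to reach the stated form. A short cleaning step, discarding at most about $r/2$ members or points too heavy to be useful, is what turns $\log d$ into $\log\bigl(d-\tfrac r2\bigr)$, and tracking explicit constants through the covering estimate produces the factor $\tfrac1{10}$ (which, as the paper notes, is not optimised). The hypothesis $2\le r\le 2\sqrt d$ is used precisely to keep these estimates in force: it ensures $d-\tfrac r2>1$ so that $\log\bigl(d-\tfrac r2\bigr)>0$, and it places $r$ in the regime small relative to $d$ where the binomial and entropy approximations, and the degree thresholds, behave as required.
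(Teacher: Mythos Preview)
The paper does not prove this statement. Theorem~\ref{thm:AlonAsodi} is quoted as Lemma~2.8 of Alon and Asodi~\cite{AA} and invoked as a black box; the paper's contribution is the \emph{application} of this bound to the family $\{F_1,\dots,F_d\}$ constructed in Section~\ref{sec:proof}. There is therefore no proof in the present paper to compare your proposal against.

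Taken on its own terms, your proposal is an honest roadmap rather than a proof. Your diagnosis is accurate: the direct counting argument gives only $\#X\gtrsim r$, and the entropy argument gives $\#X\gtrsim r\log d/\log r$, each a full factor of $r$ short of the target. But the paragraph you yourself label ``the crux'' is not carried out. Writing that you ``would aim to convert the overlap structure\dots into the extra factor $r$'' and ``would model the argument on Ruszink\'o's'' defers the entire technical content to \cite{Rusz} and \cite{AA} without reproducing any of it. The missing piece is precisely the combinatorial step in those papers that extracts, from $r$-cover-freeness, a packing-type constraint strong enough to force the quadratic dependence on $r$; nothing in your outline supplies it, so as written the sketch does not reach the stated bound.
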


We now proceed with the proof of Theorem~\ref{thm:main}.
Fix $d\ge2$ and $\eps$ such that $\frac14 \ge \eps \ge \frac1{4\sqrt{d}}$,
and let $k$ be the unique integer satisfying $2^{-(k+1)} < \eps \le 2^{-k}$.
Note that $k\ge2$.

Our plan is to define a collection $\cB$ of $\binom{d}{2^{k-2}}\left(d-2^{k-2}\right)$ boxes such that
each box has one coordinate where all its points are close to zero,
while in some other $2^{k-2}$ coordinates all the points of the box are bounded away from zero.
Next, we will sort out the points of $X$ with at least one coordinate close to zero into (not necessarily disjoint) sets $F_1,F_2,\ldots,F_d$.
All of this is done in such a way that the sets $F_1,F_2,\ldots,F_d$ are $2^{k-2}$-cover-free, thus Theorem~\ref{thm:AlonAsodi} yields the desired lower bound on $X$.

We now give all the details.
For a given set $A \subseteq [d]$ and $j \in [d]\setminus A$, let $B^{A,j} \subseteq [0,1]^d$ be the axes-parallel box
defined as
\[
B^{A,j}:=I_1 \times I_2 \times \cdots \times I_d, \quad \mbox{where }  \begin{cases}
  I_j =(0,2^{1-k}), \\
  I_i =(2^{1-k},1) \quad \mbox {for } i \in A, \\
  I_i =(0,1) \quad \mbox{for } i \in [d] \setminus \left(A \cup\{j\}\right).
\end{cases}
\]
Let $\cB := \left\{ B^{A,j} : A \in \binom{[d]}{2^{k-2}} \mbox { and } j\in[d]\setminus A \right\}.$ 
The following claim yields that any set of points from $[0,1]^d$ with dispersion at most $\eps$ must have at least one point inside every $B \in \cB$.
\begin{claim}
$\left|B^{A,j}\right| \ge \eps$ for any $A \subseteq [d]$ of size at most $2^{k-2}$ and any $j\in [d]\setminus A$.
\end{claim}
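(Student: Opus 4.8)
The claim is a direct computation of the volume of $B^{A,j}$.

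The plan is to simply compute the volume of the box $B^{A,j}$ by multiplying the lengths of its defining intervals. The interval $I_j = (0, 2^{1-k})$ has length $2^{1-k}$; each of the $\#A \le 2^{k-2}$ intervals $I_i$ with $i \in A$ has length $1 - 2^{1-k}$; and all the remaining intervals are $(0,1)$ and contribute a factor of $1$. Hence
\[
\left|B^{A,j}\right| = 2^{1-k}\cdot\left(1-2^{1-k}\right)^{\#A} \ge 2^{1-k}\cdot\left(1-2^{1-k}\right)^{2^{k-2}},
\]
where the inequality uses that $0 < 1-2^{1-k} < 1$ and $\#A\le 2^{k-2}$, so increasing the exponent only decreases the value.

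Next I would bound the factor $\left(1-2^{1-k}\right)^{2^{k-2}}$ from below by an absolute constant. Writing $m = 2^{k-2}\ge 1$ (recall $k\ge 2$), we have $2^{1-k} = 1/(2m)$, so the factor equals $\left(1 - \tfrac1{2m}\right)^{m}$. This is a classical increasing sequence in $m$ bounded below by its value at $m=1$, namely $1/2$; more than enough, one can just note $\left(1-\tfrac1{2m}\right)^m \ge e^{-1}$ for all $m\ge1$ using $\log(1-t)\ge -t/(1-t)$, or even more crudely bound it below by $1/2$. Thus $\left|B^{A,j}\right| \ge 2^{1-k}\cdot\tfrac12 = 2^{-k} \ge \eps$, where the last step is exactly the defining inequality $\eps \le 2^{-k}$ for $k$.

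There is no real obstacle here — the only mild care needed is to make the constant work out so that the final bound is $2^{-k}$ rather than something slightly smaller, which would force a comparison with $2^{-(k+1)}<\eps$ instead. Choosing the crude bound $\left(1-\tfrac1{2m}\right)^m\ge\tfrac12$ makes everything line up cleanly with $\eps\le 2^{-k}$.
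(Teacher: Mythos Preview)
Your proof is correct and essentially the same as the paper's: both compute $|B^{A,j}| = 2^{1-k}(1-2^{1-k})^{\#A}$ and then show $(1-2^{1-k})^{2^{k-2}} \ge \tfrac12$, concluding $|B^{A,j}| \ge 2^{-k} \ge \eps$. The only cosmetic difference is the elementary inequality used for that last step --- the paper invokes $1-x \ge 2^{-2x}$ for $x\in[0,1/2]$ (by convexity of $2^{-2x}$), while you use the monotonicity of $\bigl(1-\tfrac1{2m}\bigr)^m$; both yield the identical bound.
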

\begin{proof}
Since $2^{-2x}$ is convex, it holds that $(1-x) \ge 2^{-2x}$ for every $x\in[0,1/2]$.
Therefore,
\[
\left|B^{A,j}\right| = \left(1-2^{1-k}\right)^{\# A} \cdot 2^{1-k} \ge \left(2^{-2^{2-k}}\right)^{\# A} \cdot 2^{1 - k} \ge 2^{-k} \ge \eps
\,,\]
which finishes the proof of the claim.
\end{proof}

Fix a finite set of points $X \subseteq [0,1]^d$ that intersects every $B \in \cB$.
For every $j \in [d]$, we define an auxiliary set $F_j$ of the points with a small $j$-th coordinate.
Specifically, let $F_j := \{x \in X: (x)_j < 2^{1-k}\}$.
In order to lower bound $\# X$ using Theorem~\ref{thm:AlonAsodi}, we establish the following claim.
\begin{claim}
The family $\left\{F_1,F_2,\ldots,F_d\right\}$ is $2^{k-2}$-cover-free.
\end{claim}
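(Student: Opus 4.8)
The plan is to prove directly that for every $j\in[d]$ and every $A\subseteq[d]\setminus\{j\}$ with $\#A=2^{k-2}$ we have $F_j\not\subseteq\bigcup_{i\in A}F_i$; this already gives that $\{F_1,\dots,F_d\}$ is $2^{k-2}$-cover-free, since any union of at most $2^{k-2}$ of the sets $F_i$ with $i\ne j$ is contained in $\bigcup_{i\in A}F_i$ for some such $A$. For the padding step to make sense I would first record that $d-1\ge 2^{k-2}$: from $2^{-k}\ge\eps\ge\frac1{4\sqrt d}$ we get $2^{k-2}\le\sqrt d$, and together with $k\ge2$ and the integrality of $2^{k-2}$ this yields $2^{k-2}\le d-1$ for all $d\ge2$.

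The heart of the argument is to use the boxes of $\cB$ as explicit certificates. Fix $j$ and $A$ as above. Since $j\notin A$ and $\#A=2^{k-2}$, the box $B^{A,j}$ lies in $\cB$, so by the standing assumption on $X$ there is a point $x\in X\cap B^{A,j}$. Reading off the defining intervals of $B^{A,j}$, the $j$-th coordinate satisfies $(x)_j\in(0,2^{1-k})$, hence $(x)_j<2^{1-k}$ and $x\in F_j$; while for each $i\in A$ the $i$-th coordinate satisfies $(x)_i\in(2^{1-k},1)$, hence $(x)_i>2^{1-k}$ and $x\notin F_i$. Therefore $x\in F_j\setminus\bigcup_{i\in A}F_i$, which is exactly what we wanted, and combining this with the padding remark finishes the claim.

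I do not expect a real obstacle: the collection $\cB$ was built precisely so that each box separates one ``small'' coordinate from a prescribed block of $2^{k-2}$ ``large'' coordinates, so once a point of $X$ is pinned inside $B^{A,j}$ the conclusion is immediate. The only things needing care are bookkeeping: (i) reconciling the quantifier in the definition of $r$-cover-free (arbitrary subfamilies of size up to $r$) with the fact that $\cB$ only provides boxes indexed by subsets of size exactly $2^{k-2}$, handled by the inequality $d-1\ge2^{k-2}$ and padding; and (ii) checking that the strict inequalities match up, namely that $F_j$ is defined with the strict condition $(x)_j<2^{1-k}$, matching the open interval $(0,2^{1-k})$ in $B^{A,j}$, and that the open interval $(2^{1-k},1)$ forces the strict inequality $(x)_i>2^{1-k}$ needed to conclude $x\notin F_i$. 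With these in place the proof reduces to the one-line separation argument above.
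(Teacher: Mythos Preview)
Your proof is correct and follows the same approach as the paper: suppose $F_j\subseteq\bigcup_{i\in A}F_i$ for some $A\in\binom{[d]}{2^{k-2}}$ with $j\notin A$, pick $x\in X\cap B^{A,j}$, and read off from the defining intervals that $x\in F_j\setminus\bigcup_{i\in A}F_i$. Your additional remarks on padding to size exactly $2^{k-2}$ (using $2^{k-2}\le\sqrt{d}\le d-1$) and on matching the strict inequalities are careful bookkeeping that the paper leaves implicit, but the argument is otherwise identical.
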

\begin{proof}
Suppose there is a set $A \in \binom{[d]}{2^{k-2}}$ and an integer $j \in [d]\setminus A$ such
that \[
F_j \subseteq \bigcup_{i \in A} F_i
\,.\]
However, there must exist a point $x \in X \cap B^{A,j}$.
Since $(x)_j < 2^{1-k}$ and $(x)_i > 2^{1-k}$ for all $i \in A$, we conclude that $x$ is an element of $F_j \setminus \bigcup_{i \in A} F_i$, which is a contradiction.
\end{proof}

If $k=2$, then $\eps \in (1/8,1/4]$ and $\# X \ge \log_2(d) > \frac3{64}\cdot\frac{\log d }{\eps^2 \cdot \log\left(1/\eps\right) }$ by the $1$-cover-freeness.
For $k\ge3$, we have that $2^{k-2} \le \frac1{4\eps} \le \sqrt{d}$, thus
Theorem~\ref{thm:AlonAsodi} applied to the family $\left\{F_1,\ldots,F_d\right\}$ and $r = 2^{k-2}$
readily yields that
\[
\# X > \frac1{10} \cdot \frac{2^{2k-4} \cdot \log\left(d-2^{k-3}\right)}{\log \left(2^{k-2}\right)} \ge \frac1{10} \cdot \frac{2^{2k-4} \cdot \log\left(d-\sqrt{d}/2\right)}{\log \left(2^{k-2}\right)}
\,.\]
Since $\frac1{8\eps} < 2^{k-2} < \frac1{\eps}$ and $\log(d-\sqrt{d}/2) \ge \frac{\log{d}}{3}$ for all $d\ge2$, we conclude that
\[
\# X > \frac1\const \cdot \frac{ \log d }{ \eps^2 \cdot \log{\frac1\eps} } 
\,.\]
This finishes the proof of Theorem~\ref{thm:main}.

\vskip.5cm

{\bf Acknowledgements.}
We are extremely grateful to Noga Alon for pointing out to us the relevance of the notion of $r$-free-cover systems as well as the papers \cite{AA} and \cite{Rusz}.
We thank Noah Kravitz, Alexander Litvak, Vojt\v{e}ch R\"odl, Marcelo Tadeu de S\'a Oliveira Sales, and Stefan Steinerberger for fruitful discussions,
and the anonymous referees for helping us to improve the presentation of this paper.
We also gratefully acknowledge the support of the Leibniz Center for Informatics,
where several discussions about this problem were held during the Dagstuhl Seminar ``Algorithms and Complexity for Continuous Problems'' (Seminar ID 23351).

\end{document}